\DeclarePairedDelimiter\ip{\langle}{\rangle}
\numberwithin{equation}{section}
\theoremstyle{plain}
\newtheorem{theorem}{Theorem}[section]
\newtheorem{lemma}[theorem]{Lemma}
\newtheorem{proposition}[theorem]{Proposition}
\newtheorem{corollary}[theorem]{Corollary}
\theoremstyle{definition}
\title{Molecular Seeds of Shear: An operator-level necessity result for first-order Chapman--Enskog deviatoric stress}
\author{Tristan Barkman}
\date{\vspace{-5ex}}
\begin{document}
\twocolumn[
\maketitle

\begin{abstract}

A new operator-level necessity result for the Chapman--Enskog expansion is established: in closed and unforced kinetic systems, the $O(\varepsilon)$ deviatoric stress arises if and only if the first Chapman--Enskog correction $f^{(1)}$ is nonzero. This resolves a gap in the classical kinetic-to-continuum literature, where the presence of first-order deviatoric stress is typically assumed or derived formally but not shown to be necessary under explicit functional-analytic hypotheses.
Under precise nullspace structure, coercivity or quantitative hypocoercivity, and Fredholm solvability of the linearized collision operator—together with uniform $O(\varepsilon^2)$ remainder control—a sharp necessity theorem (Theorem~\ref{thm:main}) is proved: if $f^{(1)}\equiv 0$, then no $O(\varepsilon)$ deviatoric stress can appear in the hydrodynamic limit. The argument identifies the bounded mapping
\[
f^{(0)} \mapsto f^{(1)} = -L^{-1}(\partial_t^{(0)} + v\!\cdot\!\nabla_x)f^{(0)},
\]
and the induced moment-to-stress operator, and shows how remainder bounds preclude hidden $O(\varepsilon)$ contributions.
A worked BGK example verifies the construction, transport coefficients, and operator constants. Detailed assumptions and analytic estimates are provided in Section~\ref{sec:assumptions} and Appendix~\ref{app:pseudoinverse}. The discussion concludes by describing how microscopic seeds (deterministic or finite-$N$) can project into macroscopic amplification channels relevant for transition and turbulence.

\end{abstract}

\vspace{2em}
]

\section{Introduction}

Turbulence is a macroscopic manifestation of nonlinear instability and multiscale energy transfer, yet its molecular antecedents remain conceptually unsettled when compared to continuum operator-level formulations. The Boltzmann equation for the single-particle distribution $f(x,v,t)$ provides the kinetic bridge between microscopic dynamics and continuum fluid models, and the Chapman--Enskog expansion expresses macroscopic fluxes and stresses as consequences of small departures of $f$ from a local Maxwellian equilibrium (section 7.1 of \cite{ChapmanCowling}; section 5.3 of \cite{Cercignani}).

In the classical Chapman--Enskog framework, the first-order correction $f^{(1)}$ is formally responsible for producing the $O(\varepsilon)$ deviatoric stress that underlies viscous constitutive laws. However, the necessity of this connection—namely, that $O(\varepsilon)$ deviatoric stress cannot arise without a nonzero $f^{(1)}$—is not made explicit in standard derivations. This manuscript fills that gap by providing precise functional-analytic hypotheses and a rigorous operator-level argument establishing that, in closed and unforced kinetic systems, $f^{(1)}\equiv 0$ forces the $O(\varepsilon)$ deviatoric stress to vanish in the hydrodynamic limit.

More precisely, under explicit assumptions on the linearized collision operator—including its nullspace structure, coercivity or hypocoercivity, Fredholm solvability, and boundedness of its pseudoinverse—together with uniform $O(\varepsilon^2)$ remainder control, I prove a necessity theorem showing that first-order Chapman--Enskog stress emerges if and only if the kinetic correction $f^{(1)}$ is nonzero. This result complements rigorous hydrodynamic-limit programs (section 3 of \cite{BardosGolseLevermoreI}; section 1.4 of \cite{BardosGolseLevermoreII}; sections 3.6, 5.2, 6.1 of \cite{SaintRaymond}) by isolating the operator bounds and solvability conditions required to obtain viscous constitutive relations from kinetic theory.

Beyond the operator-level necessity result, I analyze how extremely small kinetic departures from equilibrium—arising from finite-$N$ statistical fluctuations, imperceptible deterministic inhomogeneities, or weak external perturbations—project onto macroscopic perturbation energy. Recent work has quantified scale-dependent velocity fluctuations generated by molecular collisions, providing concrete estimates of seed amplitudes relevant to the amplification channels studied here (section 3 of \cite{Barkman}). These seed amplitudes enter linear and nonmodal amplification channels well studied in hydrodynamic stability theory (chapter 4 of \cite{SchmidHenningson}; \cite{TrefethenEtAl}). Embedding these amplification mechanisms within a Chapman--Enskog framework equipped with explicit inversion formulas and remainder estimates provides a quantitative and testable pathway from microscopic deviations to macroscopic instability, transition, and ultimately turbulence.

\section{Previous work}

The Chapman--Enskog procedure, first systematically presented in \cite{ChapmanCowling} and summarized in modern monographs \cite{Cercignani}, provides the classical asymptotic route from the Boltzmann equation to the compressible and incompressible Navier--Stokes equations. Rigorous mathematical formulations of hydrodynamic limits and related compactness arguments were developed by Bardos, Golse, and Levermore \cite{BardosGolseLevermoreI,BardosGolseLevermoreII} and organized in Saint-Raymond's monograph \cite{SaintRaymond}.

Simplified kinetic models such as the BGK model are widely used for explicit calculations of transport coefficients and constitutive laws \cite{BGK,Bird}. Fluctuating hydrodynamics and stochastic kinetic analyses (Landau--Lifshitz formalism and subsequent kinetic-level stochastic equations) quantify how molecular noise projects onto hydrodynamic modes \cite{LandauLifshitz,ForsterNelsonStephen}. Independent advances in fluid-mechanics stability theory (nonmodal transient growth) demonstrate that nonnormal linearized operators can produce large short-time energy amplification from tiny seeds \cite{SchmidHenningson,TrefethenEtAl}.

Despite this progress, relatively few studies have made the functional-analytic properties of the Boltzmann collision operator the explicit mechanism connecting kinetic solvability conditions to the onset of macroscopic shear and turbulent energetics. This paper addresses that gap by specifying operator-level hypotheses (nullspace, coercivity, Fredholm solvability), deriving the mapping from the Chapman--Enskog first correction $f^{(1)}$ to the deviatoric stress, and explaining how microscopic seeds are necessary kinetic precursors for $O(\varepsilon)$ shear production in closed, unforced systems.

\section{Model and notation}

\subsection{Notation and variables}

For clarity, Tables \ref{tab:dimensional-vars} and \ref{tab:dimensionless-vars} summarize the dimensional and dimensionless variables used throughout the manuscript.

\begin{table*}[!ht]
\centering
\caption{Dimensional variables}
\label{tab:dimensional-vars}
\begin{tabular}{lll}
\hline
Symbol & Meaning & Units \\
\hline
$f(x,v,t)$ & Single-particle distribution function & --- \\ 
$v$ & Molecular velocity & $\mathrm{m/s}$ \\
$x$ & Spatial coordinate & $\mathrm{m}$ \\
$t$ & Time & $\mathrm{s}$ \\
$\rho(x,t)$ & Mass density & $\mathrm{kg/m^d}$ \\
$u(x,t)$ & Fluid velocity & $\mathrm{m/s}$ \\
$T(x,t)$ & Temperature & $\mathrm{K}$ \\
$P_{ij}$ & Pressure tensor & $\mathrm{Pa}$ \\
$\tau_{ij}$ & Deviatoric stress & $\mathrm{Pa}$ \\
$\tau$ & BGK relaxation time & $\mathrm{s}$ \\
$R$ & Gas constant per unit mass & $\mathrm{J/(kg\,K)}$ \\
$L$ & Characteristic length scale & m \\
$U$ & Characteristic velocity scale & m/s \\
$\lambda$ & Mean free path & m \\
$\mu$ & Dynamic (shear) viscosity & Pa\,s \\
$p$ & Scalar (thermodynamic) pressure & Pa \\
$M[\rho,u,T]$ & Local Maxwellian equilibrium & --- \\
$Q(f,f)$ & Boltzmann collision operator (rate of change of $f$) & ---- \\
$L$ & Linearized collision operator $DQ[M]$ (operator on $f$) & s$^{-1}$ (operator scale) \\
$R_\varepsilon$ & Chapman--Enskog remainder & --- \\
$S_{ij}$ & Deviatoric strain tensor (rate) & s$^{-1}$ \\
$\Omega_x$ & Spatial domain & m$^d$ \\

\hline
\end{tabular}
\end{table*}

\begin{table*}[!ht]
\centering
\caption{Dimensionless variables}
\label{tab:dimensionless-vars}
\begin{tabular}{lll}
\hline
Symbol & Definition & \\
\hline
$d$ & Spatial dimension & \\
$\mathrm{Re}=\dfrac{\rho U L}{\mu}$ & Reynolds number & \\
$\mathrm{Kn} = \lambda / L$ & Knudsen number & \\
$\hat{x} = x/L$ & Spatial coordinate scaled by $L$ & \\
$\hat{v} = v/U$ & Velocity scaled by $U$ & \\
$\hat{t} = t U / L$ & Time scaled by advective time & \\
$\hat{f} = f / f_0$ & Distribution scaled by reference & \\
$\hat{P}_{ij} = P_{ij}/(\rho U^2)$ & Pressure tensor nondimensional & \\
$\hat{\tau}_{ij} = \tau_{ij}/(\rho U^2)$ & Deviatoric stress nondimensional & \\
$\hat{T} = T/T_0$ & Temperature scaled by reference $T_0$ & \\
$\hat{M}=M/f_0$ & Maxwellian scaled by reference distribution $f_0$ & \\
$\widehat{Q}=Q/(U/L\,f_0)$ & Collision operator nondimensionalized by advective rate & \\
$\widehat{L}=L/(U/L)$ & Linearized collision operator nondimensionalized by advective rate & \\
$\widehat{R}_\varepsilon = R_\varepsilon/f_0$ & Remainder nondimensionalized by $f_0$ & \\
$\widehat{S}_{ij}=S_{ij}/(U/L)$ & Strain tensor nondimensional (rate scaled by $U/L$) & \\

\hline
\end{tabular}
\end{table*}

\subsection{Kinetic model}

Consider the Boltzmann equation for a dilute monoatomic gas in $d$ dimensions,

\begin{equation}\label{boltzmann}
\partial_t f + v\cdot\nabla_x f = Q(f,f), \qquad (x,v)\in \Omega_x\times\mathbb{R}^d, \; t>0,
\end{equation}

where $f(x,v,t)$ is the single-particle molecular distribution function and $Q(f,f)$ denotes the Boltzmann collision operator conserving mass, momentum, and energy (chapter 3 of \cite{ChapmanCowling} section 2.6 of \cite{Cercignani}). For relevant interaction laws the operator may be linearized about a local Maxwellian $M[\rho,u,T]$, yielding the linearized collision operator

\begin{equation}\label{eq:Ldef}
L g := DQ[M]\,g,
\end{equation}

which acts on weighted velocity spaces such as $L^2_v(M^{-1}\,dv)$ or polynomially weighted $L^2_v(\langle v\rangle^k dv)$ and admits the standard nullspace of collision invariants under mild hypotheses (section 3 of \cite{BardosGolseLevermoreI} section 1.4 of \cite{BardosGolseLevermoreII} section 3.6 of \cite{SaintRaymond}).

Macroscopic fields are velocity moments of $f$,

\begin{multline}\label{eq:macrostats}
\rho(x,t)=\int_{\mathbb{R}^d} f\,dv, \\ \qquad \rho u(x,t)=\int_{\mathbb{R}^d} v f\,dv, \\ \qquad E(x,t)=\int_{\mathbb{R}^d}\tfrac12|v|^2 f\,dv,
\end{multline}

and the pressure tensor is the centered second moment,

\begin{equation}\label{eq:pressure-tensor}
P_{ij}(x,t)=\int_{\mathbb{R}^d} (v_i-u_i)(v_j-u_j)\,f\,dv,
\end{equation}

with deviatoric stress

\begin{equation}\label{eq:tau}
\tau_{ij}=P_{ij}-\tfrac{1}{d}P_{kk}\,\delta_{ij}.
\end{equation}

For analytic transparency I use the BGK relaxation model as an instructive example:

\begin{equation}\label{eq:bgk-operator}
Q_{\mathrm{BGK}}(f)=\frac{1}{\tau}\bigl(M[f]-f\bigr),
\end{equation}

with relaxation time $\tau$ and local Maxwellian $M[f]$ determined by the moments of $f$ (\cite{BGK}; sections 3.1, 3.4, 6.3 \cite{Bird}). The Chapman--Enskog expansion is taken under small Knudsen scaling $\varepsilon=\mathrm{Kn}\ll1$ (section 3.5 of \cite{Bird}):

\begin{equation}\label{eq:ce-expansion}
f=f^{(0)}+\varepsilon f^{(1)}+\varepsilon^2 f^{(2)}+O(\varepsilon^3).
\end{equation}

At leading order $f^{(0)}=M[\rho,u,T]$ and satisfies the compressible Euler equations; the first correction $f^{(1)}$ supplies the viscous stress and heat flux whose constitutive forms reproduce Navier--Stokes transport coefficients after inversion of the appropriate linearized operator (sections 7.1, 9.7 of \cite{ChapmanCowling}; sections 2.8, 5.3, 5.6 of \cite{Cercignani}; \cite{BGK}).

Throughout I adopt the operator-level assumptions used in the core arguments in Section~\ref{sec:assumptions} and the notation $N(L)$ for the nullspace of $L$, $L^{-1}$ for the pseudoinverse on $N(L)^\perp$, and $\langle\cdot,\cdot\rangle_{L^2_v(M^{-1})}$ for the velocity-space inner product.

\section{Assumptions (A1--A8) — functional-analytic hypotheses}\label{sec:assumptions}

\medskip\noindent\textbf{A1 (Collision kernel and mapping properties).}

The collision operator $Q(f,f)$ corresponds to a physically realistic molecular interaction law (hard spheres or cutoff Maxwell-type potentials). It is well defined as a bilinear map

\begin{equation}\label{eq:Qmap}
Q: L^1_v(\langle v\rangle^m)\times L^1_v(\langle v\rangle^m)\to L^1_v(\langle v\rangle^m),
\end{equation}

and, under Maxwellian weights, extends to a bounded bilinear form on velocity-weighted $L^2$ spaces.

The operator preserves mass, momentum, and energy; and satisfies teh standard H-theorem (sections 4.1, 4.2 of \cite{ChapmanCowling}; section 2.6 of \cite{Cercignani}).

All linearizations, norms, and projections below are taken in the corresponding weighted $L^2_v(M^{-1})$ spaces.

\medskip\noindent\textbf{A2 (Spatial domain and boundary conditions).}

Either (i) $\Omega_x=\mathbb{T}^d$ (periodic box), or (ii) $\Omega_x=\mathbb{R}^d$ with sufficient decay so that all integration-by-parts operations produce no boundary terms.

Specifically, when $\Omega_x=\mathbb{R}^d$ I require the macroscopic fields and kinetic perturbations to lie in $H^s_x(\mathbb{R}^d)$ with $s>d/2$ and polynomial velocity weights sufficient to control all moments used below.

Inhomogeneous walls or externally imposed fluxes are excluded in the deterministic setting of the main theorem.

\medskip\noindent\textbf{A3 (Initial data and moment/regularity class).}

The initial perturbation belongs to an $x$-Sobolev and velocity-weighted space:

\begin{equation}\label{eq:init-reg}
f_0-M \in H^s_x L^2_v(\langle v\rangle^k),\qquad s>d/2,\; k\ge 2,
\end{equation}

for a global Maxwellian $M$, with the weights chosen to control the velocity moments entering the coercivity and inversion estimates.

\medskip\noindent\textbf{A4 (Chapman--Enskog scaling and remainder control).}

With $\varepsilon$ the Knudsen number, I use the expansion \eqref{eq:ce-expansion}

with $f^{(0)}=M[\rho,u,T]$ and remainder $R_\varepsilon$ satisfying

\begin{equation}\label{eq:remainder-hyp}
\|R_\varepsilon\|_{L^2_xL^2_v(M^{-1})}\le C\,\varepsilon^2,
\end{equation}

for $0<\varepsilon<\varepsilon_0$, under the regularity and coercivity assumptions below.

When this uniform $O(\varepsilon^2)$ bound cannot be proven for a particular kernel, it is treated as an explicit hypothesis and verified in the examples.

\medskip\noindent\textbf{A5 (Spectral and coercivity properties of $L$).}

For each $x$, define $L=DQ[M]$ acting on $L^2_v(M^{-1}dv)$ with $M(x,v)=M[\rho(x),u(x),T(x)]$.

I assume:

(i) \emph{Nullspace structure}.

\begin{multline}\label{eq:nullspace-structure}
N(L)=\mathrm{span}\{1, v_1,\dots,v_d, |v|^2\},\\  \qquad L^2_v(M^{-1})=N(L)\oplus N(L)^\perp.
\end{multline}

(ii) \emph{Coercivity or hypocoercivity}.

Either a uniform spectral gap exists:

\begin{equation}\label{eq:coercivity}
\langle -L g, g\rangle_{L^2_v(M^{-1})}\ge \lambda_0\,\|g\|_{L^2_v(M^{-1})}^2, \qquad g\in N(L)^\perp,
\end{equation}

with $\lambda_0>0$ independent of $x$; or $L$ satisfies the quantitative hypocoercive estimates of (section 2 of \cite{Caflisch1980}; pages 6-7 of \cite{MouhotStrain06}; sections 1.3, 1.4 of \cite{Villani}).

Hard-sphere and cutoff Maxwell kernels fall within this class.

Precise constants and derivative-loss requirements are specified in Appendix~\ref{app:pseudoinverse}.

\medskip\noindent\textbf{A6 (Fredholm solvability and bounded pseudoinverse).}

For each $x$, the equation $L g = h$ admits a solution $g\in N(L)^\perp$ if and only if $h$ is orthogonal to $N(L)$:

\begin{equation}\label{eq:fredholm-solv}
\int_{\mathbb{R}^d} h(v)\,\phi(v)\,dv = 0 \quad\text{for all }\phi\in N(L).
\end{equation}

The pseudoinverse $L_x^{-1}:N(L_x)^\perp\to N(L_x)^\perp$ exists and is bounded with uniform operator norm

\begin{equation}\label{eq:Linverse-bound}
\|L_x^{-1}\|\le C_{\mathrm{inv}}
\end{equation}

whenever the macroscopic fields satisfy A7.

Continuity of $L_x^{-1}$ with respect to smooth variations of $M(x,v)$ follows from the perturbative theory in (pages 6-7 of \cite{MouhotStrain06}; \cite{Guo2002}).

Appendix~\ref{app:pseudoinverse} records the explicit dependence of $C_{\mathrm{inv}}$ on spectral-gap or hypocoercivity constants.

\medskip\noindent\textbf{A7 (Regularity and boundedness of macroscopic fields).}

The fields $(\rho,u,T)$ defining $M(x,v)$ lie in $H^s_x$ with $s>d/2$, with uniformly bounded derivatives up to the degree required by the hypocoercive estimates.

This guarantees that $(\partial_t^{(0)}+v\cdot\nabla_x)f^{(0)}$ belongs to the weighted spaces on which $L^{-1}$ acts, and ensures the uniformity of all operator constants across $x$.

\medskip\noindent\textbf{A8 (Closed, unforced system).}

The kinetic evolution is closed and deterministic: no external forcing, stochastic volumetric inputs, imposed shear, or inhomogeneous boundary injections occur in the setting of the theorem.

Finite-$N$ fluctuations and forced/wall-driven shear are treated separately in the discussion section.

\section{Functional-analytic lemmas and solvability}

I collect the operator-level facts used in the Chapman--Enskog inversion and in the remainder estimates.

Throughout, $L=DQ[M]$ denotes the linearized collision operator at the local Maxwellian $M[\rho,u,T]$, acting on the weighted space $L^2_v(M^{-1})$ unless otherwise indicated.

\begin{lemma}[Nullspace structure and coercivity]\label{lem:null-coercive}

Under A1 and A5, the linearized collision operator $L=DQ[M]$ satisfies

\begin{equation}\label{eq:nullspace}
N(L)=\mathrm{span}\{1, v_1,\dots,v_d, |v|^2\},
\end{equation}

and $L$ is coercive (or hypocoercive) on $N(L)^\perp$ in the sense that

\begin{equation}\label{eq:coercivity_lemma}
\ip{-Lg}{g} \;\ge\; \lambda_0 \|g\|_{L^2_v(M^{-1})}^2, \qquad g\in N(L)^\perp,
\end{equation}

for some constant $\lambda_0>0$ depending only on the interaction kernel and the macroscopic bounds in A7.

If a uniform spectral gap is unavailable, the hypocoercive version of \eqref{eq:coercivity_lemma} holds with the usual derivative-correction terms in the weighted hypocoercive norm.

\end{lemma}

\begin{lemma}[Fredholm solvability and pseudoinverse]\label{lem:fredholm}

Under A6 and the coercivity or hypocoercivity of Lemma~\ref{lem:null-coercive}, the equation

\begin{equation}\label{eq:Lg_eq_h}
L g = h
\end{equation}

admits a solution $g\in N(L)^\perp$ if and only if

\begin{multline}\label{eq:solvability_condition}
\int_{\mathbb{R}^d} h(v)\,\phi(v)\,dv = 0
\\
\qquad\text{for all collision invariants }\phi \in N(L).
\end{multline}

Moreover there exists a bounded pseudoinverse

\begin{equation}\label{eq:pseudoinverse_map}
L^{-1}:N(L)^\perp\longrightarrow N(L)^\perp,
\end{equation}

with uniform bound $\|L^{-1}\|\le C_{\mathrm{inv}}$ under the regularity assumptions of A7 (including continuity of the macroscopic fields and boundedness of the local Maxwellian).

\end{lemma}

As an immediate consequence, whenever the solvability conditions hold pointwise in $x$, the first Chapman--Enskog correction is given by

\begin{equation}\label{eq:f1}
f^{(1)}(x,v)
= -\,L_x^{-1}\bigl((\partial_t^{(0)}+v\cdot\nabla_x)f^{(0)}\bigr)(x,v),
\end{equation}

with the mapping bounded in all weighted norms specified in A3--A7.

\begin{lemma}[Projection identity and admissibility of the streaming term]\label{lem:projection}

Let $P$ denote the orthogonal projection in $L^2_v(M^{-1})$ onto $N(L)$.

If $f^{(0)}(x,v)=M[\rho(x),u(x),T(x)]$ is a local Maxwellian whose macroscopic fields satisfy the leading-order Euler equations pointwise in $x$, then

\begin{equation}\label{eq:projection}
P\bigl((\partial_t^{(0)}+v\cdot\nabla_x)f^{(0)}\bigr)=0.
\end{equation}

If the Euler equations hold only up to $O(\varepsilon)$ compatibility errors, then the projection produces $O(\varepsilon)$ commutator terms. These terms are incorporated into the parallel component of the remainder and are controlled by the Gr\"onwall argument in Section~\ref{app:remainder} under the smallness threshold $\varepsilon<\varepsilon_0$ arising in Theorem~\ref{thm:main}.

In all cases covered by A1--A8, the streaming/source term thus lies in $N(L)^\perp$ modulo controlled $O(\varepsilon)$ corrections, and is admissible to inversion by $L^{-1}$.

\end{lemma}

\section{Main theorem}\label{sec:main}

Here is the principal result in its operator-level form. 

\begin{theorem}[Molecular seed of shear: operator-level necessity]\label{thm:main}

Assume A1--A8 and the operator bounds of Lemmas~\ref{lem:null-coercive}--\ref{lem:projection}.

Let $\{f_\varepsilon\}_{0<\varepsilon<\varepsilon_0}$ be sufficiently regular solutions of the rescaled Boltzmann equation under Chapman--Enskog scaling on the spatial domain specified in A2, and write the Chapman--Enskog expansion

\begin{equation}\label{eq:ce-decomp}
f_\varepsilon(x,v,t)=f^{(0)}(x,v,t)+\varepsilon f^{(1)}(x,v,t)+R_\varepsilon(x,v,t),
\end{equation}

with $f^{(0)}(x,v,t)=M[\rho,u,T]$.

Assume the remainder satisfies the uniform bound

\begin{equation}\label{eq:remainder-uniform}
\|R_\varepsilon\|_{L^2_xL^2_v(M^{-1})}\le C\,\varepsilon^2,
\end{equation}

for all $0<\varepsilon<\varepsilon_0$ in the norms and regularity framework of A3--A7.

All statements below are local in time on any interval $[0,T]$ on which the macroscopic fields remain in $H^s_x$ with the bounds required in A7.

Constants in \eqref{eq:remainder-uniform}--\eqref{eq:f1-rep} depend only on
$\|(\rho,u,T)\|_{H^s_x}$, the spectral-gap or hypocoercivity constants, the uniform pseudoinverse bound $C_{\mathrm{inv}}$, and the domain geometry.

Uniqueness of $f^{(1)}$ below is always understood pointwise in $x$ in the weighted space of A3--A7, under the orthogonality condition~\eqref{eq:solvability}.

Any initial-layer corrections violating \eqref{eq:solvability} are absorbed into the parallel remainder and controlled by Section~\ref{app:remainder}.

Then:

\begin{enumerate}[label=(\roman*)]

\item \textbf{(Constitutive decomposition).}

The pressure tensor admits the asymptotic expansion

\begin{equation}\label{eq:pressure-decomp}
P(x,t)=p(x,t)\,I+\varepsilon\,\tau^{(1)}(x,t)+\mathcal{O}(\varepsilon^2)
\end{equation}

in $L^2_x$, where

\begin{equation}\label{eq:tau1}
\tau^{(1)}_{ij}(x,t)=\int_{\mathbb{R}^d}(v_i-u_i)(v_j-u_j)\,f^{(1)}(x,v,t)\,dv.
\end{equation}

The linear map $f^{(1)}\mapsto\tau^{(1)}$ is bounded under the velocity-moment weights of A3.

\item \textbf{(Vanishing for uniform Maxwellians).}

If $f^{(0)}(x,v,t)$ is spatially uniform, then

\begin{equation}\label{eq:uniform-stream}
(\partial_t^{(0)}+v\cdot\nabla_x)f^{(0)}\equiv 0,
\end{equation}

and under the Fredholm assumptions of A6 this implies $f^{(1)}(x,v,t)\equiv0$ and thus $\tau^{(1)}\equiv 0$.

Hence, in closed and unforced systems, no $O(\varepsilon)$ deviatoric stress can arise in the Chapman--Enskog expansion unless a nontrivial first correction is present.

\item \textbf{(Uniqueness and explicit representation).}

Whenever the solvability orthogonality

\begin{equation}\label{eq:solvability}
\int_{\mathbb{R}^d}\bigl((\partial_t^{(0)}+v\cdot\nabla_x)f^{(0)}\bigr)\,\phi(v)\,dv=0
\end{equation}

holds for all collision invariants $\phi(v)$, the first correction is uniquely determined pointwise in $x$ by

\begin{equation}\label{eq:f1-rep}
f^{(1)}(x,v,t)= -\,L_x^{-1}\bigl((\partial_t^{(0)}+v\cdot\nabla_x)f^{(0)}\bigr)(x,v,t),
\end{equation}

where $L_x$ denotes the linearized collision operator at the local Maxwellian $M[\rho(x,t),u(x,t),T(x,t)]$.

The mapping $f^{(0)}\mapsto f^{(1)}$ is continuous in the weighted norms of A3--A7, with operator norm bounded by $C_{\mathrm{inv}}$ from Lemma~\ref{lem:fredholm}.

\end{enumerate}

\end{theorem}

\begin{corollary}[Deterministic necessity of the first kinetic correction]\label{cor:necessity}

Under the hypotheses of Theorem~\ref{thm:main}, production of $O(\varepsilon)$ deviatoric stress in a closed, unforced kinetic system necessarily requires a nonzero first-order Chapman--Enskog correction $f^{(1)}$.

\end{corollary}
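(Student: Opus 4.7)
The plan is a contrapositive from Theorem~\ref{thm:main}: assume $f^{(1)} \equiv 0$ on the space-time domain in question and deduce that no $O(\varepsilon)$ deviatoric contribution survives in the pressure tensor. The ingredients are already in place; only a small assembly step and a weighted-moment estimate are needed.

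First, invoke the constitutive decomposition~\eqref{eq:pressure-decomp} and the representation~\eqref{eq:tau1} from part (i) of Theorem~\ref{thm:main}, namely $P(x,t) = p(x,t)\,I + \varepsilon\,\tau^{(1)}(x,t) + O(\varepsilon^2)$ in $L^2_x$, with $\tau^{(1)}$ the centered second moment of $f^{(1)}$. The linearity of that moment map immediately forces $\tau^{(1)} \equiv 0$ under $f^{(1)} \equiv 0$. Since $p\,I$ carries no deviatoric component, applying the trace-subtraction of~\eqref{eq:tau} collapses the entire deviatoric stress $\tau$ to the $O(\varepsilon^2)$ remainder, which is strictly subleading to the $O(\varepsilon)$ threshold invoked in the corollary. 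This proves the contrapositive.

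The only subtlety is that~\eqref{eq:pressure-decomp} was asserted in $L^2_x$ whereas the uniform kinetic remainder bound~\eqref{eq:remainder-uniform} lives in the weighted norm $L^2_xL^2_v(M^{-1})$. Promoting~\eqref{eq:remainder-uniform} to the desired $L^2_x$ bound on the residual second moment $\int(v_i-u_i)(v_j-u_j)R_\varepsilon\,dv$ uses Cauchy--Schwarz against the Gaussian weight,
\begin{equation*}
\left|\int (v_i-u_i)(v_j-u_j)\,R_\varepsilon(x,v,t)\,dv\right| \le \bigl\|(v_i-u_i)(v_j-u_j)\bigr\|_{L^2_v(M\,dv)}\,\|R_\varepsilon(x,\cdot,t)\|_{L^2_v(M^{-1}dv)},
\end{equation*}
where the first factor is a finite fourth-order Gaussian moment of $M[\rho,u,T]$, uniform in $x$ by A7 together with the Sobolev embedding $H^s_x\hookrightarrow L^\infty_x$ for $s>d/2$, and the second factor is controlled pointwise in $x$ so that integration over $\Omega_x$ using A2 and~\eqref{eq:remainder-uniform} closes the bound at $O(\varepsilon^2)$.

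The main obstacle is therefore not analytical but bookkeeping: one must verify that the constant in the Gaussian moment bound depends quantitatively only on $\|(\rho,u,T)\|_{H^s_x}$ and hence remains uniform over the time interval on which Theorem~\ref{thm:main} holds, so that the residual estimate is genuinely $\varepsilon$-independent. Once that is recorded, the corollary follows by direct combination with parts (i) and (iii) of the theorem: any $O(\varepsilon)$ deviatoric stress would, via~\eqref{eq:pressure-decomp} and~\eqref{eq:tau1}, force $\tau^{(1)} \not\equiv 0$ and therefore a nontrivial $f^{(1)}$, the latter uniquely represented by the bounded pseudoinverse~\eqref{eq:f1-rep}.
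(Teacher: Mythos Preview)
Your argument is correct and matches the paper's intent: the corollary is stated without proof immediately after Theorem~\ref{thm:main}, so the paper treats it as the straightforward contrapositive of part~(i) that you spell out. Your Cauchy--Schwarz paragraph is sound but slightly redundant, since the decomposition~\eqref{eq:pressure-decomp} is already asserted in $L^2_x$ as part of the theorem's conclusion; you are re-proving a step that Theorem~\ref{thm:main}(i) has already absorbed.
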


\section{BGK worked example}

I illustrate the operator-level statements of Theorem~\ref{thm:main} in the BGK setting.

Consider the BGK equation

\begin{equation}\label{eq:bgk}
\partial_t f + v\cdot\nabla_x f = \frac{1}{\tau}\,(M[f]-f),
\end{equation}

with constant relaxation time $\tau>0$.

The linearization at a local Maxwellian $M[\rho,u,T]$ gives

\begin{equation}\label{eq:Lbgk}
L_{\mathrm{BGK}} g = -\frac{1}{\tau}\,(g - P g),
\end{equation}

where $P$ denotes the orthogonal projection onto the nullspace $\mathrm{span}\{1,v_i,|v|^2\}$.

Hence $L_{\mathrm{BGK}}^{-1}$ is explicit and satisfies

\begin{equation}\label{eq:Lbgk-inv}
\|L_{\mathrm{BGK}}^{-1}\| \le \tau,
\end{equation}

so that $C_{\mathrm{inv}}=\tau$ in A6.

\medskip
The Chapman--Enskog expansion is sought in the form
\begin{equation}\label{eq:bgk-ce}
f = M + \tau f^{(1)} + O(\tau^2),
\end{equation}
where the small parameter is $\varepsilon = \tau$.

At leading order $O(1)$, the equation yields $f^{(0)} = M$.

At order $O(\tau)$, the same first-order correction equation as in the Boltzmann case is obtained:
\begin{equation}\label{eq:bgk-step}
(\partial_t^{(0)} + v\cdot\nabla_x)M = - f^{(1)}.
\end{equation}

Thus the representation formula \eqref{eq:f1} reads simply

\begin{equation}\label{eq:f1-bgk}
f^{(1)} = -\,L_{\mathrm{BGK}}^{-1}\bigl((\partial_t^{(0)}+v\cdot\nabla_x)M\bigr)
= \tau\,(\partial_t^{(0)}+v\cdot\nabla_x)M.
\end{equation}

\subsection*{Evaluation of the deviatoric stress}

Fix a point $x_0$ and apply a Galilean shift so $u(x_0)=0$.

The Maxwellian takes the standard form

\begin{equation}\label{eq:maxwell}
M(v)=\frac{\rho}{(2\pi RT)^{d/2}}\exp\!\Bigl(-\tfrac{|v|^2}{2RT}\Bigr),
\end{equation}

and I employ the classical Gaussian moment identities:

\begin{equation}\label{eq:gauss-moments-2}
\int v_i v_j M(v)\,dv = \rho R T\,\delta_{ij},
\end{equation}

\begin{equation}\label{eq:gauss-moments-4}
\int v_i v_j v_k v_l\,M(v)\,dv
= \rho (RT)^2(\delta_{ij}\delta_{kl}+\delta_{ik}\delta_{jl}+\delta_{il}\delta_{jk}).
\end{equation}

The first-order stress is

\begin{multline}\label{eq:tau1-bgk}
\tau^{(1)}_{ij}
= \int_{\mathbb{R}^d} v_i v_j\,f^{(1)}(v)\,dv \\
= -\int_{\mathbb{R}^d} v_i v_j\,(\partial_t^{(0)}+v\cdot\nabla_x)M(v)\,dv.
\end{multline}
Evaluating the derivatives of $M$ using the moment formulas above and writing all derivatives of $(\rho,u,T)$ at $x_0$ gives

\begin{equation}\label{eq:tau1-evaluated}
\tau^{(1)}_{ij}
= -2\,\rho\,\tau\,R\,T\,S_{ij},
\end{equation}

where

\begin{equation}\label{eq:Sdef}
S_{ij}=\tfrac12(\partial_i u_j+\partial_j u_i)
- \frac{1}{d}(\nabla\cdot u)\,\delta_{ij}
\end{equation}

is the deviatoric strain tensor.

Thus the standard BGK constitutive relation is recovered

\begin{equation}\label{eq:bgk-viscosity}
\tau^{(1)}_{ij} = -2\mu\,S_{ij},\qquad \mu = \rho\,\tau\,R\,T,
\end{equation}

which matches the viscosity obtained from the exact BGK inversion $L^{-1}_{\mathrm{BGK}}$.

\subsection*{Remainder estimate}

Since $L_{\mathrm{BGK}}^{-1}$ is explicit with operator norm $\tau$, the remainder estimate of A4 holds in the weighted space $L^2_v(M^{-1})$ with

\begin{equation}\label{eq:remainder-bgk}
\|R_\tau\|_{L^2_xL^2_v(M^{-1})} \;\le\; C(\|(\rho,u,T)\|_{H^s_x})\,\tau^2.
\end{equation}

The constant depends smoothly on $\|(\rho,u,T)\|_{H^s_x}$ and on the domain geometry but not on spatial position $x$, since the pseudoinverse is uniform.

Thus all hypotheses A1–A8 and Lemmas~\ref{lem:null-coercive}–\ref{lem:projection} are verified explicitly for the BGK model, and the operator-level necessity theorem applies directly.

\section{Remainder estimates and Chapman--Enskog error bounds}

I state a model-precise remainder estimate.

\begin{theorem}[Remainder estimate — model form]

Let A1--A7 hold and let $\{f_\varepsilon\}_{0<\varepsilon<\varepsilon_0}$ be solutions admitting

\begin{equation}\label{eq:ansatz}
f_\varepsilon = f^{(0)} + \varepsilon f^{(1)} + R_\varepsilon,
\end{equation}

with $f^{(1)}$ given by \eqref{eq:f1} wherever solvability holds. Assume:

\begin{enumerate}[label=(H\arabic*)]

\item Spectral/coercivity hypothesis: either a uniform spectral gap $\lambda_0>0$ is available or $L$ satisfies quantitative hypocoercivity estimates \cite{Caflisch1980,MouhotStrain06,Villani}.

\item Uniform macroscopic regularity: $\rho,u,T\in H^s_x$ with $s>d/2$ and uniform bounds controlling $L$ and $L^{-1}$.

\item Compatibility and moment control: initial data satisfy A3 and Chapman--Enskog solvability compatibility at order $\varepsilon$, and initial remainders are $O(\varepsilon^2)$.

\end{enumerate}

Under H1--H3 there exist constants

\begin{multline}\label{eq:C_eps0}
C=C\big(T,\|(\rho,u,T)\|_{H^s_x},C_{\mathrm{inv}}\big), \\ \qquad
\varepsilon_0=\varepsilon_0\big(\|(\rho,u,T)\|_{H^s_x},C_{\mathrm{inv}}\big),
\end{multline}

such that for every $0<\varepsilon<\varepsilon_0$ and every $t\in[0,T]$ the remainder satisfies the explicit bound

\begin{equation}\label{eq:remainder-bound}
\|R_\varepsilon(t)\|_{L^2_xL^2_v(M^{-1})}\le C\,\varepsilon^2.
\end{equation}

All constants appearing in the energy inequalities of Appendix~\ref{app:remainder} are traced to the coercivity/hypocoercivity constants and the $H^s_x$ norms of the macroscopic fields; in particular the threshold $\varepsilon_0$ is determined by the ratio of dissipation to commutator/source constants appearing in the estimates.

\end{theorem}

\paragraph{Sketch of proof.}

Subtracting equations for $f^{(0)}$ and $f^{(1)}$ from the kinetic equation yields an evolution for $R_\varepsilon$ of the form

\begin{equation}\label{eq:remainder-pde}
\partial_t R_\varepsilon + v\cdot\nabla_x R_\varepsilon - \frac{1}{\varepsilon} L R_\varepsilon = \mathcal{S}_\varepsilon,
\end{equation}

where $\mathcal{S}_\varepsilon$ collects higher-order residuals. Projecting onto $N(L)^\perp$ and using coercivity/hypocoercivity yields dissipation estimates; moment equations control projections on $N(L)$. Energy estimates with a Gr\"onwall closure (detailed in Appendix~\ref{app:remainder}) yield the uniform $O(\varepsilon^2)$ bound under H1--H3 (sections 3.2, 3.3, 7.1, 9.7 of \cite{ChapmanCowling}; sections 2.6, 3.9 of \cite{Cercignani}; section 3 of \cite{BardosGolseLevermoreI}; section 1.4 of \cite{BardosGolseLevermoreII}; section 1.2, pages 6-7 of \cite{MouhotStrain06}; sections 1.3, 1.4 of \cite{Villani}; section 2 of \cite{Guo2002}).

\section{Conclusion}

Under explicit functional-analytic hypotheses I prove a deterministic operator-level mechanism by which first-order Chapman--Enskog kinetic corrections produce viscous constitutive behavior and enable macroscopic deviatoric stress in closed, unforced systems. The theorem asserts that production of $O(\varepsilon)$ deviatoric stress requires a nonzero $f^{(1)}$. A constructive BGK calculation reproduces the classical viscosity coefficient and demonstrates inversion and remainder control in an explicit setting. Extensions to boundary-driven or forced problems require boundary-layer analysis and are outlined for future work.

\appendix

\section{Pseudoinverse construction: full statement and proof sketch}\label{app:pseudoinverse}

This appendix provides a detailed statement and proof sketch of the pseudoinverse construction used in the main text.

\begin{proposition}[Pseudoinverse construction — full]\label{prop:appendix-pseudoinverse}

Let $M(x,v)=M[\rho(x),u(x),T(x)]$ with $\rho,u,T\in H^{s}_x$, $s>d/2$, uniformly bounded and sufficiently smooth. Assume $L_x=DQ[M(x,\cdot)]$ satisfies the uniform spectral-gap condition: there exists $\lambda_0>0$ such that for all $x$ and all $g\in N(L_x)^\perp$,

\begin{equation}\label{eq:appendix-coercivity}
\langle -L_x g,g\rangle_{v,M}\ge\lambda_0\|g\|_{L^2_v(M^{-1})}^2.
\end{equation}

Then for each $x$ the restriction $L_x|_{N(L_x)^\perp}$ is invertible and the pseudoinverse

\begin{equation}\label{eq:Lxinv}
L_x^{-1}:N(L_x)^\perp\to N(L_x)^\perp
\end{equation}

is bounded with

\begin{equation}\label{eq:Lxinv-bound}
\|L_x^{-1}h\|_{L^2_v(M^{-1})}\le C_{\mathrm{inv}}\|h\|_{L^2_v(M^{-1})},
\end{equation}

where $C_{\mathrm{inv}}$ depends only on $\lambda_0$ and uniform bounds on $M$ (and not on $\varepsilon$). Additionally $x\mapsto L_x^{-1}$ is continuous in operator norm provided $\rho,u,T$ vary continuously in $H^s_x$.

\end{proposition}

\begin{proof}[Proof sketch]\label{proof:pseudoinverse}

I give a step-by-step constructive argument and indicate precise references where the black-box results are invoked.

Step 1: Frozen-$x$ operator and spectral gap. Fix $x_0$. Consider the frozen operator $L_{x_0}$ acting on $L^2_v(M(x_0,\cdot)^{-1}dv)$. By hypothesis the spectral gap $\lambda_0>0$ on $N(L_{x_0})^\perp$ holds; therefore the restriction $L_{x_0}:N(L_{x_0})^\perp\to N(L_{x_0})^\perp$ is boundedly invertible. This is a standard consequence of coercivity; see Baranger \& Mouhot (theorem 1 of \cite{BarangerMouhot}) or Mouhot \& Strain \cite{MouhotStrain06} for constructive statements in kinetic settings.

Step 2: Uniformity in $x$ via continuous dependence. The Maxwellian $M(x,v)$ depends smoothly on $(\rho,u,T)(x)$. Under $H^s_x$ control ($s>d/2$) pointwise values of macroscopic fields vary continuously and uniformly in $x$; the mapping $x\mapsto L_x$ is continuous in operator norm on the chosen velocity-weighted space. Using perturbation theory for linear operators (Kato \cite{Kato}), a spectral gap lower bound persists under small operator perturbations and the resolvent depends continuously on the perturbation. Since the gap is uniform by assumption, there exists a uniform inverse bound $C_{\mathrm{inv}}$ valid for all $x$ in the considered compact set (or uniformly if global bounds apply). For explicit constructive dependence of the coercivity constant on the Maxwellian parameters, see Mouhot \& Strain \cite{MouhotStrain06} and Baranger \& Mouhot \cite{BarangerMouhot}.

Step 3: Construction of $L_x^{-1}$ on $N(L_x)^\perp$. For each $x$ define the orthogonal projector $P_x$ onto $N(L_x)$ in $L^2_v(M(x,\cdot)^{-1}dv)$ and write $L_x = L_x|_{N(L_x)^\perp}\oplus 0$ on the decomposition. The inverse is given on $N(L_x)^\perp$ by Riesz theory; the coercivity lower bound provides $\|L_x^{-1}\|\le\lambda_0^{-1}$ modulo norm equivalence constants depending on weights (again see \cite{MouhotStrain06}).

Step 4: Continuity in operator norm. Since $x\mapsto L_x$ is continuous and the inversion map is continuous on the set of invertible operators with uniform gap, $x\mapsto L_x^{-1}$ is continuous in operator norm. This step uses standard resolvent identity manipulations and Kato-type estimates; see Kato \cite{Kato}.

Remark on hypocoercivity: If the spectral gap hypothesis is replaced by a hypocoercivity framework (A5(b)), one obtains a bounded inverse in hypocoercive norms (with derivative losses) by following Villani's constructive Lyapunov functional approach \cite{Villani} and constructive implementations in \cite{MouhotStrain06}. The main difference is (i) $L^{-1}$ is bounded on a higher-regularity velocity space $H^{s_v}_v(M^{-1})$ and (ii) constants reflect derivative gains/losses required to close the hypocoercive estimates.

\end{proof}

\section{Remainder estimate: detailed proof (Gr\"onwall closure)}\label{app:remainder}

This appendix provides the detailed energy estimates for the remainder $R_\varepsilon$ used in the main theorem.

\subsection{Equation for the remainder}\label{sec:remainder-eq}

Subtract the Chapman--Enskog truncation from the kinetic equation to obtain the PDE for $R_\varepsilon$:

\begin{equation}\label{eq:remainder-pde-2}
\partial_t R_\varepsilon + v\cdot\nabla_x R_\varepsilon - \frac{1}{\varepsilon} L_x R_\varepsilon = \mathcal{S}_\varepsilon,
\end{equation}

where $\mathcal{S}_\varepsilon$ collects quadratic/higher-order terms and explicit Taylor remainders in $\varepsilon$ arising from the nonlinear collision operator and time dependence of $f^{(0)}$ and $f^{(1)}$.

\subsection{Projection and splitting}

Let $P_x$ be the orthogonal projector onto $N(L_x)$ and split

\begin{equation}\label{eq:Rsplit}
R_\varepsilon = P_x R_\varepsilon + (I-P_x)R_\varepsilon =: R_\varepsilon^{\parallel} + R_\varepsilon^{\perp}.
\end{equation}

Projecting the remainder equation onto $N(L_x)^\perp$ yields

\begin{equation}\label{eq:Rperp-eq}
\partial_t R_\varepsilon^{\perp} + v\cdot\nabla_x R_\varepsilon^{\perp} - \frac{1}{\varepsilon} L_x R_\varepsilon^{\perp} = (I-P_x)\mathcal{S}_\varepsilon + \mathcal{C}_\varepsilon,
\end{equation}

where $\mathcal{C}_\varepsilon$ denotes commutator terms arising from $[\partial_t,P_x]$ and $[v\cdot\nabla_x,P_x]$ that are controlled using macroscopic regularity. In particular, under A7 one shows (for suitable weighted norms)

\begin{equation}\label{eq:Ccomm}
\| \mathcal{C}_\varepsilon \| \le C_{\mathrm{comm}} \Big( \|R_\varepsilon^\perp\| + \|R_\varepsilon^\parallel\| \Big),
\end{equation}

with $C_{\mathrm{comm}}$ depending on $\|(\rho,u,T)\|_{H^s_x}$ and derivatives appearing in the projection.

\subsection{Energy inequality on the orthogonal part}

Take the $L^2_xL^2_v(M^{-1})$ inner product of the $R_\varepsilon^\perp$ equation with $R_\varepsilon^\perp$. Using coercivity (A5(a)) and integration by parts in $x$ (A2) yields the dissipation inequality

\begin{equation}\label{eq:energy-dissip}
\frac{1}{2}\frac{d}{dt}\|R_\varepsilon^\perp\|^2 + \frac{\lambda_0}{\varepsilon}\|R_\varepsilon^\perp\|^2
\le C_1\bigl(\|R_\varepsilon^\perp\|^2 + \|R_\varepsilon^\parallel\|^2\bigr) + C_2\varepsilon^4,
\end{equation}

where $C_1$ depends on macroscopic $H^s_x$ norms and collision kernel parameters, and $C_2$ arises from $\mathcal{S}_\varepsilon$ which is $O(\varepsilon^2)$ in amplitude and contributes $O(\varepsilon^4)$ to the energy balance.

\subsection{Moment equations for the parallel part}

Projecting onto $N(L_x)$ and using conservation laws yields a closed finite-dimensional ODE system for the moments composing $R_\varepsilon^\parallel$. Schematically,

\begin{equation}\label{eq:parallel-ode}
\frac{d}{dt}R_\varepsilon^\parallel = \mathcal{M} R_\varepsilon^\parallel + \mathcal{F}_\varepsilon,
\end{equation}

where $\mathcal{M}$ depends on macroscopic gradients and $\mathcal{F}_\varepsilon$ is $O(\varepsilon^2)$ in the $L^2_x$ norm (coming from quadratic residuals and projections of $\mathcal{S}_\varepsilon$). Solving the ODE or using energy bounds gives

\begin{equation}\label{eq:parallel-bound}
\|R_\varepsilon^\parallel(t)\| \le C_3\bigl(\|R_\varepsilon^\parallel(0)\| + \varepsilon^2\bigr) e^{C_4 t},
\end{equation}

with constants $C_3,C_4$ depending on uniform macroscopic norms.

\subsection{Gr\"onwall closure}

Combine \eqref{eq:energy-dissip} and \eqref{eq:parallel-bound} to obtain

\begin{multline}\label{eq:gronwall-combine}
\frac{d}{dt}\|R_\varepsilon^\perp\|^2 + \frac{2\lambda_0}{\varepsilon}\|R_\varepsilon^\perp\|^2 \\
\le C_5\bigl(\|R_\varepsilon^\perp\|^2 + \varepsilon^4 + (\|R_\varepsilon^\parallel(0)\|+\varepsilon^2)^2 e^{2C_4 t}\bigr).
\end{multline}

For $\varepsilon$ sufficiently small the dissipative term $\tfrac{2\lambda_0}{\varepsilon}\|R_\varepsilon^\perp\|^2$ dominates the RHS term proportional to $\|R_\varepsilon^\perp\|^2$, allowing absorption and yielding an estimate of the form

\begin{equation}\label{eq:Rperp-bound}
\|R_\varepsilon^\perp(t)\|^2 \le C_6\varepsilon^4 + C_7\|R_\varepsilon^\parallel(0)\|^2 e^{C_8 t},
\end{equation}

and using \eqref{eq:parallel-bound} with small initial parallel remainder $O(\varepsilon^2)$ one obtains the uniform-in-time bound

\begin{equation}\label{eq:Rtotal-bound}
\|R_\varepsilon(t)\|_{L^2_xL^2_v(M^{-1})}\le C\varepsilon^2,\qquad t\in[0,T],
\end{equation}

with $C$ depending on kernel parameters and macroscopic $H^s_x$ norms but independent of $\varepsilon$ for $\varepsilon<\varepsilon_0$. The hypocoercive alternative follows from replacing the $L^2$ energy functional by Villani's hypocoercive Lyapunov functional (kinetic energy plus mixed derivative corrections); the same Gr\"onwall closure holds with constants tracking derivative losses \cite{MouhotStrain06,Villani}.

\qed

\end{document}